\theoremstyle{plain} 
\newtheorem{theorem}{Theorem}[section] 
\newtheorem{thm}{Theorem}[section] 
\newtheorem{proposition}[theorem]{Proposition}
\newtheorem{lemma}[theorem]{Lemma}
\newtheorem{coro}[theorem]{Corollary} 
\theoremstyle{definition} 
\newtheorem{definition}[theorem]{Definition} 
\theoremstyle{remark} 
\newtheorem{remark}[thm]{Remark} 
\numberwithin{equation}{section}
\numberwithin{thm}{section}
\newcommand{\na}{\mathbb{N}}
\newcommand{\en}{\mathbb{Z}}
\newcommand{\al}{\alpha}
\newcommand{\alv}{\alpha_v}
\newcommand{\sz}{\sigma(D,\al)}
\newcommand{\sgf}{semi-Grundy function\xspace}
\newcommand{\gf}{Grundy function\xspace}
\newcommand{\gfs}{Grundy functions\xspace}
\newcommand{\sk}{semi-kernel\xspace}
\date{}
\begin{document}

\begin{center}
  \title{Semi-Grundy Function, an hereditary approach to Grundy function}
  \thanks{This research was supported by grants PAPIIT-IN104717 and CONACyT
    CB-2013/219840. The second author was supported by postdoctoral position under
    the grant CONACyT 2018-000022-01EXTV at Charles University.}

  \author{
    Hortensia Galeana-S\'{a}nchez$^{1}$, Ra\'{u}l Gonz\'{a}lez-Silva$^{2}$ \\ \\
    \tiny{$^1$Instituto de Matem\'{a}ticas, UNAM, Ciudad Universitaria, M\'exico CDMX, MEXICO}\\
    \tiny{$^2$ Department of Applied Mathematics, Faculty of Mathematics and Physics, Charles University, Malostranské nám\v{e}stí 25,11800 Prague, Czech Republic.}\\
      \tiny{\texttt{\MakeLowercase{$^{1}$hgaleana@matem.unam.mx,
            $^{2}$rulo65@ciencias.unam.mx}}} }
  
\end{center}

\begin{abstract}
  Grundy functions have found many applications in a wide variety of games, in
  solving relevant problems in Game Theory. Many authors have been working on this
  topic for over many years, by example: C. Berge, P. Erdös, M. P. Schützenberger,
  R. P. Sprague.  Since the existence of a Grundy function on a digraph implies that
  it must have a kernel, the problem of deciding if a digraph has a Grundy function
  is NP-complete, and how to calculate one is not clearly answered.

  In this paper, we introduce the concept: Semi-Grundy function, which arises
  naturally from the connection between kernel and semi-kernel and the connection
  between kernel and Grundy function. We explore the relationship of this concept
  with the Grundy function, proving that for digraphs with a defining hereditary
  property is sufficient to get a semi-grundy function to obtain a Grundy
  function. Then we prove sufficient and necessary conditions for some products of
  digraphs to have a semi-Grundy function. Also, it is shown a relationship between
  the size of the semi-Grundy function obtained for the Cartesian Product and the
  size of the semi-Grundy functions of the factors. This size is an upper bound of
  the chromatic number.

  We present a family of digraphs with the following property: for each natural
  number $n\geq 2$, there is a digraph $R_n$ that has two Grundy functions such that
  the difference between their maximum values is equal to $n$. Then it is important
  to have bounds for the Grundy or semi-Grundy functions.

\end{abstract}
\maketitle
\section{Introduction.}

The concept of kernel was introduced by Von Neumann and Morgenstern \cite{vNoM44} in
the context of Game Theory. The problem of the existence of a kernel in a given
digraph has been studied by several authors, see for example
(\cite{pD80,pD87,pDhM81,mR53}). An important concept to study kernels of a digraph,
is the concept of semi-kernel, introduced in \cite{hGvN84}. The following result is
an important example of this relation:
\begin{theorem}\cite{vN71} \label{teo1}
  If every induced digraph of $D$ has a semi-kernel then $D$ has a kernel.
\end{theorem}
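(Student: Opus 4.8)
The plan is to argue by induction on the number of vertices of $D$, exploiting the fact that the hypothesis is hereditary: if every induced subdigraph of $D$ has a semi-kernel, then the same is true of every induced subdigraph of any induced subdigraph of $D$. The base case (the empty digraph, or a single vertex) is immediate, since a single vertex is trivially a kernel. For the inductive step I would fix a nonempty semi-kernel $S$ of $D$, which exists by hypothesis, and try to extend it to a kernel by recursing on the part of $D$ that $S$ leaves untouched.

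Concretely, write $N^-(S)$ for the set of vertices outside $S$ having an arc into $S$ (the vertices absorbed by $S$), and set $Z = V(D)\setminus(S\cup N^-(S))$. The key structural step is to show that there is \emph{no} arc between $S$ and $Z$ in either direction. No vertex of $Z$ sends an arc to $S$, by the very definition of $Z$; conversely, if some $s\in S$ sent an arc to a vertex $z\in Z$, then $z$ would be dominated by $S$, i.e. $z\in N^+(S)$, and the defining property of a semi-kernel ($N^+(S)\subseteq N^-(S)$) would force an arc from $z$ back to $S$, contradicting $z\in Z$. Hence $S$ and $Z$ are completely non-adjacent.

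With this in hand, if $Z=\emptyset$ then every vertex outside $S$ is absorbed by $S$, so the independent set $S$ is already a kernel and we are done. Otherwise $D[Z]$ is a proper induced subdigraph (proper because $S\neq\emptyset$), it inherits the hypothesis, and by the induction hypothesis it has a kernel $K_Z$. I would then claim that $K=S\cup K_Z$ is a kernel of $D$: it is independent because $S$ and $K_Z$ are each independent and no arc joins $S$ to $Z\supseteq K_Z$; and it is absorbing because any vertex not in $K$ lies either in $N^-(S)$, whence it has an arc to $S$, or in $Z\setminus K_Z$, whence it has an arc to $K_Z$ since $K_Z$ is a kernel of $D[Z]$.

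The step I expect to require the most care is guaranteeing that the recursion strictly decreases the vertex count, i.e. that the chosen semi-kernel $S$ is nonempty; this is exactly where one must use the convention (and the hereditary hypothesis) that every induced subdigraph possesses a nonempty semi-kernel, so that $Z$ is a proper subset and $D[Z]$ genuinely smaller. Once nonemptiness is secured, the ``no arc between $S$ and $Z$'' lemma does all the real work, since it is what makes the union $S\cup K_Z$ simultaneously independent and absorbing.
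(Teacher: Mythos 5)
Your proof is correct and complete. One point of context: the paper itself gives no proof of Theorem \ref{teo1} --- it is quoted from \cite{vN71} and used as motivation --- so the only thing to compare against is the closest argument the paper does carry out, namely the proof of its Proposition in Section 3 that the same hereditary hypothesis yields a semi-Grundy function. There the authors peel off a semi-kernel $S_0$ of $D$, then a semi-kernel $S_1$ of $D[V(D)\setminus S_0]$, and so on, and convert the resulting partition into a semi-Grundy function. Your decomposition is genuinely different, and sharper exactly where it needs to be: instead of deleting only the semi-kernel $S$ at each stage, you delete $S$ together with its in-neighborhood $N^-(S)$ and recurse on the untouched set $Z$, and your key lemma (no arc between $S$ and $Z$ in either direction, which uses the semi-kernel condition exactly once) is what allows you to glue $S$ to a kernel of $D[Z]$ while preserving both independence and absorption. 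The paper's peeling cannot reach that conclusion directly: removing only $S_k$ leaves $S_k z$-arcs into the later classes, which is precisely why that construction yields only a semi-Grundy function (hence, by the paper's Lemma, a semi-kernel) rather than a kernel. So your argument buys the stronger conclusion of Theorem \ref{teo1}, while the paper's peeling buys a graded structure on all of $V(D)$.

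You are also right that nonemptiness is the delicate point. Under the paper's literal definition the empty set is vacuously a semi-kernel, and then the theorem would be false: every induced subdigraph of a directed triangle would ``have a semi-kernel,'' yet the triangle has no kernel. The hypothesis must be read, as in \cite{vN71}, as asserting that every nonempty induced subdigraph has a nonempty semi-kernel; under that reading your induction goes through without change, since $S\neq\emptyset$ guarantees that $D[Z]$ is strictly smaller.
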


Another concept closely related to kernels of a digraph is Grundy functions,
introduced by P. M. Grundy in \cite{pG39}. This concept have found many applications
in Game Theory see for example (\cite{cB85,aFmL91,aF96,hL02}) and in graph theory:
\cite{pEsH03}. The relation of these concepts is shown in the following results:

\begin{theorem}\cite{cB85} \label{teo2}
  If $D$ has a Grundy function $g$, then the set $N=\{x\in V(D)|g(x)=0\}$ is a kernel
  of $D$.
\end{theorem}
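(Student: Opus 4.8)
The plan is to verify directly that the set $N=\{x\in V(D)\mid g(x)=0\}$ meets the two defining requirements of a kernel---independence and absorbency---and to derive both from the defining property of a \gf, namely that $g(x)$ equals the minimum non-negative integer missing from the set of $g$-values of the out-neighbors of $x$ (its \emph{mex}).

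First I would extract the two consequences of this \emph{mex} definition that do all the work. Since $g(x)$ itself never appears among the out-neighbor values, I get (i): for every arc $(x,y)\in A(D)$ one has $g(y)\neq g(x)$. From the minimality of the excludant I get (ii): for every vertex $x$ and every integer $k$ with $0\le k<g(x)$, some out-neighbor $y$ of $x$ satisfies $g(y)=k$; otherwise $k$ would be a smaller missing value and $g(x)$ could not be the mex.

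Independence then follows from (i): if $x,y\in N$ and $(x,y)\in A(D)$, then $g(x)=g(y)=0$, contradicting $g(y)\neq g(x)$, so $N$ contains no arc. Absorbency follows from (ii): if $x\notin N$ then $g(x)\ge 1$, and applying (ii) with $k=0$ produces an out-neighbor $y$ with $g(y)=0$, i.e.\ $y\in N$; thus every vertex outside $N$ sends an arc into $N$. Combining the two properties shows $N$ is a kernel.

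There is no genuine analytic difficulty here; the only point that needs care is invoking the full mex characterisation rather than the weaker requirement that adjacent vertices merely carry different values. Property (i) alone gives independence but says nothing about domination, and it is precisely (ii)---the guarantee that the value $0$ is actually realised on the out-neighborhood of any vertex with positive value---that yields absorbency. So the ``hard'' part is really just recognising which half of the definition produces which half of the kernel conditions.
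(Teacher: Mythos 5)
Your proof is correct and complete: the two consequences you extract from the \emph{mex} definition (adjacent vertices get distinct values; every value below $g(x)$ is realised on $\Gamma^+(x)$) are exactly what is needed, and applying the second with $k=0$ gives absorbency while the first gives independence. The paper itself offers no proof of this statement---it is cited from Berge \cite{cB85}---but your argument is the standard one for this classical result, so there is nothing further to compare.
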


\begin{theorem}\cite{cB85} \label{teo3}
  If $D$ is a kernel-perfect digraph, then $D$ possesses a Grundy function.
\end{theorem}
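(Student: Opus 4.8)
The plan is to construct a Grundy function explicitly by peeling off kernels in successive layers, using the hypothesis that \emph{every} induced subdigraph of $D$ has a kernel. First I would set $V_0=V(D)$ and, since $D=D[V_0]$ has a kernel, pick a kernel $N_0$ and declare $g(x)=0$ for every $x\in N_0$. I then iterate: given $V_i$ and the previously defined layers $N_0,\dots,N_{i-1}$, I set $V_{i+1}=V_i\setminus N_i$, invoke kernel-perfectness to obtain a kernel $N_i$ of the induced subdigraph $D[V_i]$, and assign $g(x)=i$ to every $x\in N_i$. The layers $N_i$ are pairwise disjoint and exhaust $V(D)$ (for finite $D$ the recursion halts once $V_i=\emptyset$), so this yields a well-defined $g\colon V(D)\to\na\cup\{0\}$.

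It remains to verify the two defining properties of a Grundy function, namely that (i) $g(x)\neq g(y)$ whenever $(x,y)$ is an arc, and (ii) for every vertex $x$ and every $m<g(x)$ some out-neighbour $y$ of $x$ has $g(y)=m$; together these say precisely that $g(x)$ is the minimum excludant of the $g$-values on the out-neighbours of $x$. For (i), if $g(x)=g(y)=i$ then $x,y\in N_i$, and since $N_i$ is a kernel of $D[V_i]$ it is independent, so no arc joins $x$ and $y$ in $D[V_i]$ and hence none in $D$. For (ii), let $g(x)=i$ and fix $m<i$; since $x$ lies in none of $N_0,\dots,N_{i-1}$, an easy induction gives $x\in V_m\setminus N_m$, and the absorbing property of the kernel $N_m$ of $D[V_m]$ produces an out-neighbour $y\in N_m$ of $x$, so $(x,y)$ is an arc of $D$ with $g(y)=m$.

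I expect the construction and the verification of (i)--(ii) to be routine once kernel-perfectness is available; the only genuinely delicate point is the bookkeeping of the recursion so that it is well-founded and covers all of $V(D)$. This is immediate for finite digraphs, but if $D$ is permitted to be infinite it requires running the same peeling argument by transfinite induction over the ordinals, with Grundy values accordingly ranging over ordinals rather than natural numbers.
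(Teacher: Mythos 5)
Your proof is correct and is the standard kernel-peeling construction: extract a kernel $N_0$ of $D$, a kernel $N_1$ of $D[V(D)\setminus N_0]$, and so on, assigning value $i$ on $N_i$, with independence of $N_i$ giving the exclusion property and the absorbing property of $N_m$ giving every value $m<g(x)$ among the out-neighbours. The paper itself cites this theorem from Berge without reproving it, but its proof of the analogous proposition (every induced subdigraph has a semi-kernel implies $D$ has a semi-Grundy function) uses exactly this layered decomposition, so your argument is essentially the same approach.
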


The problem of deciding if a digraph has kernel is NP-complete. The research of
sufficient conditions for a digraph to have kernel has been addressed by many authors
along many years, for a comprehensive survey see \cite{eBvG}. For digraphs with a
defining hereditary property is sufficient to get:
\begin{itemize}
\item a semi-kernel to obtain a kernel,
\item a kernel to obtain a Grundy function,
\end{itemize}

We introduce a new concept, namely \sgf. This is a non-negative integer function
defined on the set of vertices of a digraph. This concept generalizes that of
Grundy function and allow us to obtain Grundy functions in an easier way, in those
digraphs defined by an hereditary property.

In \cite{hGrG10} we prove sufficient and necessary conditions for the Cartesian
Product $\sz$ to have a Grundy function in terms of the existence of Grundy
function or kernel in $D$ and in each $\al_v$. Also, it is shown a relationship
between the size of the Grundy function obtained for $\sz$ and the size of the
Grundy functions of the factors $\al_v$.  The most significative results on Grundy
Functions are proved for \sgf, so we obtain a wide generalization of this results.

\section{Preliminaries.}
For general concepts we refer the reader to \cite{cB85}.\\
Let $D$ a digraph; $V(D)$ and $A(D)$ will denote the sets of vertices and arcs of $D$
respectively. Let $X_1, X_2$ be a subset of $V(D)$; the arc $(u_1,u_2)$ will be called an
$X_1X_2$-arc whenever $u_1\in X_1$ and $u_2\in X_2$. And $D[X_1]$ will denote the subdigraph
of $D$ induced by $X_1$. 

A set $I\subseteq V(D)$ is independent if $A(D[I])=\emptyset$. A semi-kernel $S$ of $D$ is
an independent set such that if there exist a $Sz$-arc then must exist a $zS$-arc in
$D$. A kernel $N$ of $D$ is an independent set of vertices such that for each
$z\in V(D)-N$ there exists a $zN$-arc. A digraph $D$ is a kernel-perfect digraph whenever
each one of its induced subdigraphs has a kernel.

A non-negative function $g$ is called a Grundy function on $D$ if for every vertex,
$x$, $g(x)$ is the smallest non-negative integer which does not belong to the set
$\{g(y)|y\in\Gamma^+(x)\}$.

The concepts of semi-kernel, kernel and Grundy functions of a digraph are nearly related
as we can see in Theorem \ref{teo1},  Theorem \ref{teo2} and Theorem \ref{teo3}.

In \cite{cB85} C. Berge defined the cartesian sum of $n$ digraphs
$D_1, D_2,\ldots, D_n$, denoted by $D_1+D_2+\ldots +D_n$, as follows:
\begin{enumerate}[i)]
\item $\displaystyle{V(D_1+D_2+\ldots +D_n)=\prod_{i=1}^{n}V(D_i)}$,
\item $\displaystyle{\Gamma(x_1,x_2,...x_n)=\bigcup_{i=1}^{n}\Big(\{x_1\}\times,...,\times\{x_{i-1}\}\times
  \Gamma(x_i)\times,...,\times \{x_n\}\Big)}$
\end{enumerate}
This operation comes naturally from the theory of Games. In \cite{cB85}, C. Berge proved:
\begin{theorem}\label{teo_berge}
  The cartesian sum $D_1+D_2+\ldots +D_n$ of digraphs having Grundy function, also has a
  Grundy function.
\end{theorem}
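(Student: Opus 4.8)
The plan is to exhibit an explicit Grundy function on the cartesian sum, namely the \emph{nim-sum} (the bitwise exclusive-or of binary representations, which I write $\oplus$) of the Grundy functions of the factors. Writing $g_i$ for a Grundy function of $D_i$, I would set
\[
g(x_1,\ldots,x_n)=g_1(x_1)\oplus g_2(x_2)\oplus\cdots\oplus g_n(x_n),
\]
and then verify that $g$ satisfies the defining mex-property: $g(x_1,\ldots,x_n)$ equals the least non-negative integer not appearing among the values of $g$ on the out-neighbors of $(x_1,\ldots,x_n)$. Since the cartesian sum is associative (changing exactly one of the first $n-1$ coordinates is the same whether one groups them or not), it would in fact suffice to treat $n=2$ and induct; but the nim-sum handles arbitrary $n$ uniformly, so I would argue directly.

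By the definition of $\Gamma$ for the cartesian sum, every out-neighbor of $(x_1,\ldots,x_n)$ is obtained by replacing a single coordinate $x_i$ with some $y_i\in\Gamma(x_i)$ in $D_i$; such a neighbor has $g$-value $s\oplus g_i(x_i)\oplus g_i(y_i)$, where $s=g(x_1,\ldots,x_n)$. The verification then splits into the two standard steps. First, \emph{no out-neighbor attains the value $s$}: if the neighbor modifying coordinate $i$ had value $s$, then $g_i(x_i)=g_i(y_i)$ with $y_i\in\Gamma(x_i)$, contradicting that $g_i$ is a Grundy function, since $g_i(x_i)$ is the mex of its out-neighbor values and hence differs from each of them.

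Second, \emph{every $t<s$ is attained by some out-neighbor}. Here I would let $k$ be the position of the highest bit in which $s$ and $t$ differ; because $t<s$, this bit is $1$ in $s$ and $0$ in $t$, so, reading off the parity of the $k$-th bits in $s=\bigoplus_j g_j(x_j)$, at least one index $i$ has $g_i(x_i)$ with bit $k$ set. For such an $i$, the number $r=g_i(x_i)\oplus(s\oplus t)$ satisfies $r<g_i(x_i)$, since $s\oplus t$ has no bits above position $k$ and XORing clears bit $k$ of $g_i(x_i)$ while leaving the higher bits untouched. Because $g_i(x_i)$ is the mex of the values $g_i(z)$ over $z\in\Gamma(x_i)$, every value below $g_i(x_i)$, in particular $r$, is realized by some $y_i\in\Gamma(x_i)$; the corresponding out-neighbor then has value $s\oplus g_i(x_i)\oplus r=t$. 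Combining the two steps shows that $s$ is precisely the mex of the out-neighbor values, so $g$ is a Grundy function.

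I expect the bit-manipulation in the second step to be the main obstacle: one must argue carefully that the highest differing bit simultaneously locates a coordinate $i$ whose Grundy value can be lowered and guarantees, via the mex-property of $g_i$, that the required smaller value $r$ is actually attained by an out-neighbor in $D_i$. The first step and the reduction by associativity are routine by comparison.
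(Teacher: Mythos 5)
Your proposal is correct and matches the approach behind the paper's own treatment of this statement: the paper does not reproduce a proof (it cites Berge), but it explicitly remarks that Berge's proof uses the digital sum, which is exactly the nim-sum $\oplus$ you employ. Both steps of your mex verification --- no out-neighbor attains $s$, and every $t<s$ is attained by clearing the highest differing bit in some coordinate and invoking the mex property of that factor's Grundy function --- constitute the standard, correct Sprague--Grundy argument.
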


Let $D$ a digraph, $\al=(\al_v)_{v\in V(D)}$ a family where the $\al_v$ are mutually
disjoint digraphs. The Cartesian product of $\al$ over $D$, denoted by $\sz$ is defined as
follows:
\begin{enumerate}[i)]
\item $V(\sz)=\bigcup_{v\in V(D)}V(\al_v)$,
\item $A(\sz)=\big(\bigcup_{v\in V(D)}A(\al_v)\big)\cup \{(x,y)|x\in V(\al_u), y\in
  V(\al_v) \text{ and } (u,v)\in A(D)\}$.
  \end{enumerate}

\section{Semi-Grundy functions.}

\begin{definition}
  Let $D$ be a digraph. A function $s:V(D)\to \na$ is a \emph{semi-Grundy} function if
  satisfies:
  \begin{enumerate}
  \item $s(x)=k$ implies that each $y\in \Gamma^+(x)$ satisfies
    $s(y)\not=k$; \label{def:inciso1}
  \item $s(x)=k$ and $y\in \Gamma^+(x)$ with $s(y)>k$ implies that there exists
    $z\in \Gamma^+(y)$ such that $s(z)=k$. \label{def:inciso2}
  \end{enumerate}
\end{definition}

\begin{remark}
  Every Grundy function is in particular a semi-Grundy function. This tell us that the
  following families of digraphs has semi-Grundy functions: acyclic, transitive,
  kernel-perfect and digraphs without odd-cycles.
\end{remark}

\begin{lemma}
  If $D$ has a \sgf then $D$ has a semi-kernel. 
\end{lemma}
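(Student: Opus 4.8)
The plan is to imitate the construction behind Theorem \ref{teo2}, where the bottom level set $\{x : g(x)=0\}$ of a Grundy function turns out to be a kernel. I would take the analogous object for the \sgf $s$, namely a \emph{minimum-value} level set. Put $m=\min\{s(x) : x\in V(D)\}$ and let
$S=\{x\in V(D) : s(x)=m\}$. I deliberately work with the minimum value $m$ rather than with the value $0$: for an honest Grundy function one checks (by taking a vertex of least label) that $0$ is always attained, but for a \sgf condition (\ref{def:inciso2}) does not force this, so passing to the minimum is exactly what guarantees $S\neq\emptyset$. Since a \sk is understood to be nonempty---this is implicit in the usefulness of Theorem \ref{teo1}---this nonemptiness is not a cosmetic point.

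Next I would verify that $S$ is independent. Suppose, for contradiction, that $(x,y)\in A(D)$ with $x,y\in S$. Then $s(x)=m$ and $y\in\Gamma^+(x)$, so condition (\ref{def:inciso1}) yields $s(y)\neq m$, contradicting $y\in S$. Hence $A(D[S])=\emptyset$. This step is a direct application of the first axiom, in complete parallel with the way independence of the $0$-level of a Grundy function follows from properness on arcs.

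The heart of the argument is the return-arc requirement, and this is where condition (\ref{def:inciso2}) does the work. Assume there is an $Sz$-arc, say $x\in S$ and $z\in\Gamma^+(x)$. By condition (\ref{def:inciso1}) we have $s(z)\neq m$, and by minimality of $m$ this forces $s(z)>m$. Now apply condition (\ref{def:inciso2}) with $k=m$: since $s(x)=m$, $z\in\Gamma^+(x)$ and $s(z)>m$, there exists $w\in\Gamma^+(z)$ with $s(w)=m$, i.e. $w\in S$. Thus $(z,w)$ is a $zS$-arc, which is precisely the semi-kernel condition, and $S$ is a \sk.

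The main obstacle---really the only delicate point---is recognizing that one must \emph{not} blindly copy the $g(x)=0$ recipe of Theorem \ref{teo2}: a \sgf may avoid the value $0$ entirely (for instance a single isolated vertex carrying any positive label is a valid \sgf), so the naive candidate could be empty. Replacing $0$ by the minimum level repairs this, after which axioms (\ref{def:inciso1}) and (\ref{def:inciso2}) supply independence and the return arc respectively, matching the two defining clauses of a \sk one for one. Everything else is routine, so I expect the write-up to be short.
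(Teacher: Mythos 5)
Your proposal is correct and follows essentially the same argument as the paper: both take the minimum value $m_0=\min\{s(x)\mid x\in V(D)\}$, form the level set $S=s^{-1}(m_0)$, derive independence from condition (\ref{def:inciso1}), and obtain the return arc from condition (\ref{def:inciso2}) after noting that any out-neighbor of $S$ must have value strictly greater than $m_0$. Your additional remark on why one must use the minimum rather than the value $0$ (since a \sgf need not attain $0$) is a sound observation, implicitly reflected in the paper's choice of $m_0$ as well.
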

\begin{proof}
  Let $m_0:=\min \{s(x)|x\in V(D)\}$ and we define $S:=s^{-1}(m_0)$. We will prove that
  $S$ is a semi-kernel of $D$.

  Let $x, y \in S$, this implies that $s(x)=m_0=s(y)$. From \ref{def:inciso1} of the
  definition of \sgf this implies that $x\not\in\Gamma^+(y)$ and $y\not\in\Gamma^+(x)$. Thus $S$
  is an independent set.

  Now suppose that $x\in S$ and $y\in\Gamma^+(x)$. Thus $s(x_0)=m_0$ and from the
  definition of $m_0$ it follows that $s(y)>m_0$. From \ref{def:inciso2} of the definition
  of \sgf, there exists $z\in \Gamma^+(y)$ such that $s(z)=m_0$. Then $S$ is a semi-kernel
  of $D$.
  \end{proof}

\begin{remark}
  This concept differs from the concept of \emph{pseudo-Grundy} function presented in
  \cite{cB85}: a pseudo-Grundy function determines a kernel in a digraph.
\end{remark}

Now we point out some interesting facts between the concepts of semi-kernel, kernel, \gf
 and \sgf.

\begin{itemize}
\item A digraph can have more than one \sgf (see Figure \ref{f1}).
\item A digraph can have a \gf $f$ and \sgf $s$, where the maximum value of $f$ is greater
  than the maximum value of $s$ (see Figure \ref{f2}).
\item A digraph can have a \gf $f$ and \sgf $s$, where the maximum value of $f$ is less
  than the maximum value of $s$ (see Figure \ref{f3}).
\item If a digraph have \sk, this not implies that it has \sgf (see Figure \ref{f4}).
\item There are digraphs with \sgf and no \gf (see Figure \ref{f5}).
\item If a digraph has a \sgf this not implies that it has a kernel (see Figure \ref{f6}).
\item Having \sgf is not an hereditary property on induced subdigraphs (see Figure \ref{f6}). 
\end{itemize}

\begin{figure}[ht]
  \centering
  \includegraphics[width=70mm]{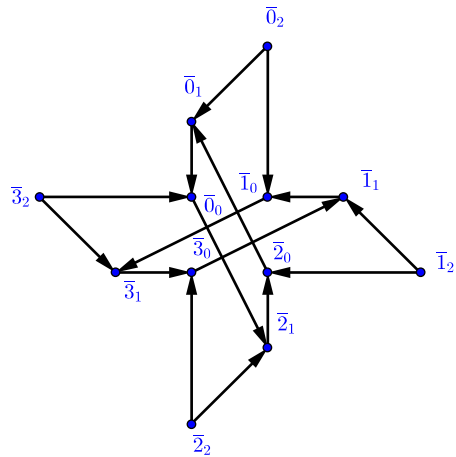}    
  \caption{A digraph with two different Grundy functions: one of them is given by the
    numbers $\bar{x}$, the other by the subscripts.}
  \label{f1}
\end{figure}

\newpage

\begin{figure}[ht]
  \centering
  \includegraphics[width=45mm]{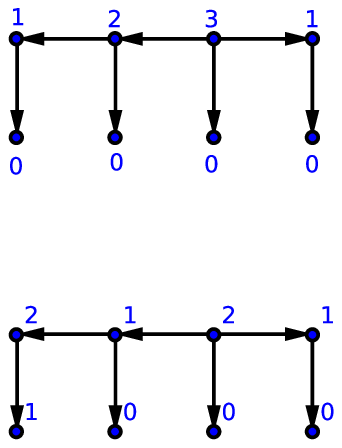}
  \caption{A digraph with a  Grundy functions and a \sgf}
  \label{f2}
\end{figure}

\begin{figure}[ht]
  \centering
  \includegraphics[width=38mm]{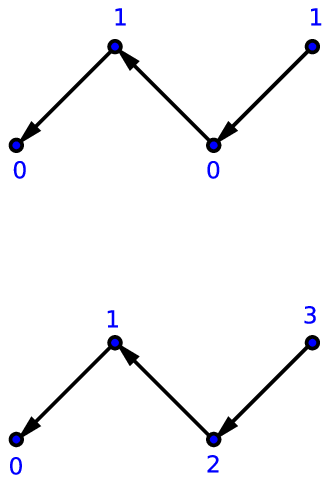}
  \caption{A digraph with a  Grundy functions and a \sgf}
  \label{f3}
\end{figure}

\begin{figure}[ht]
  \centering
  \includegraphics[width=60mm]{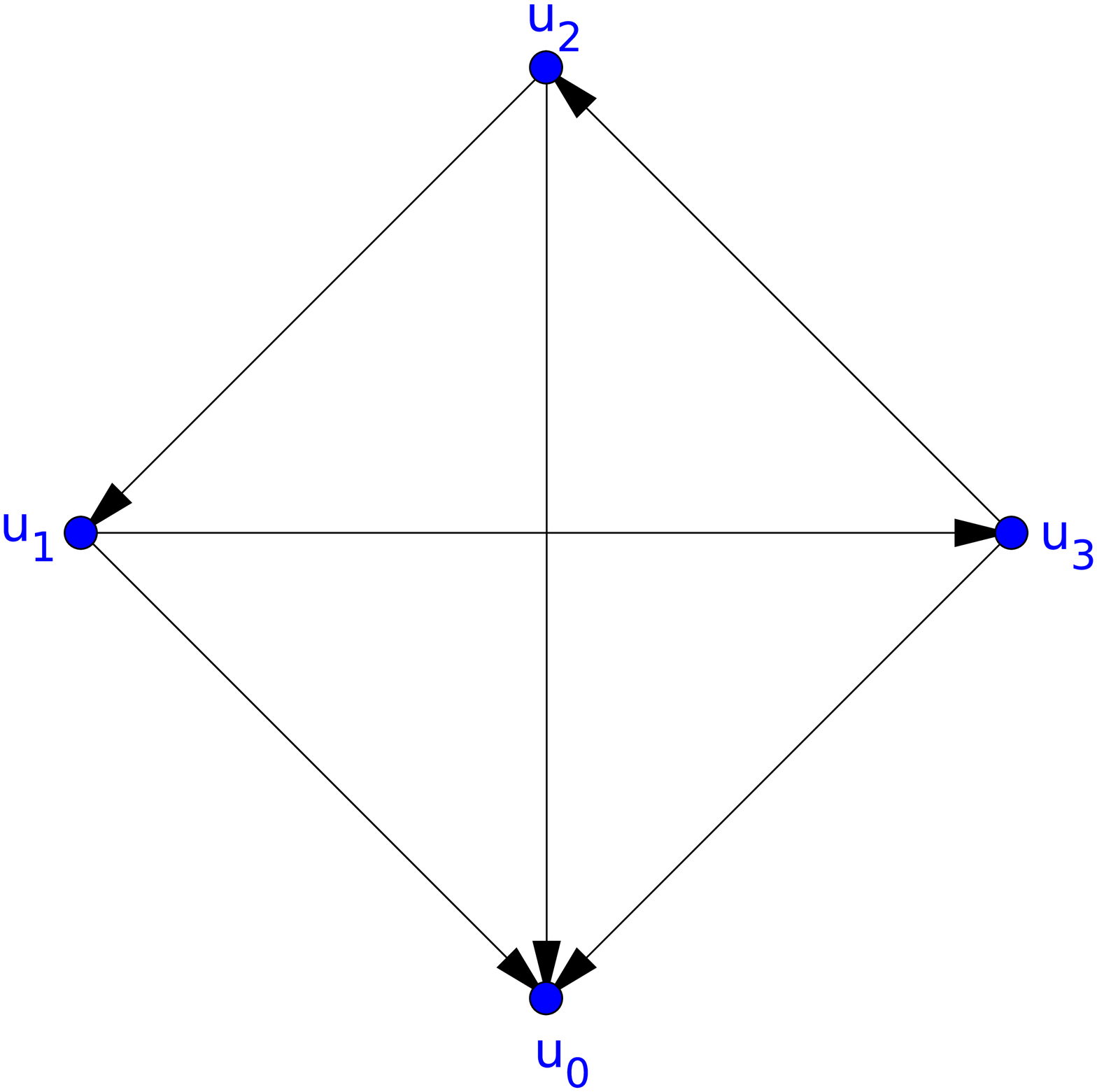}
  \caption{A digraph with semi-kernel and no \sgf}
  \label{f4}
\end{figure}

\begin{figure}[ht]
  \centering
  \includegraphics[width=48mm]{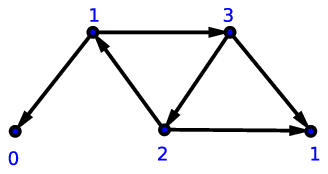}
  \caption{A digraph with \sgf and no Grundy function}
  \label{f5}
\end{figure}

\begin{figure}[ht]
  \centering
  \includegraphics[width=45mm]{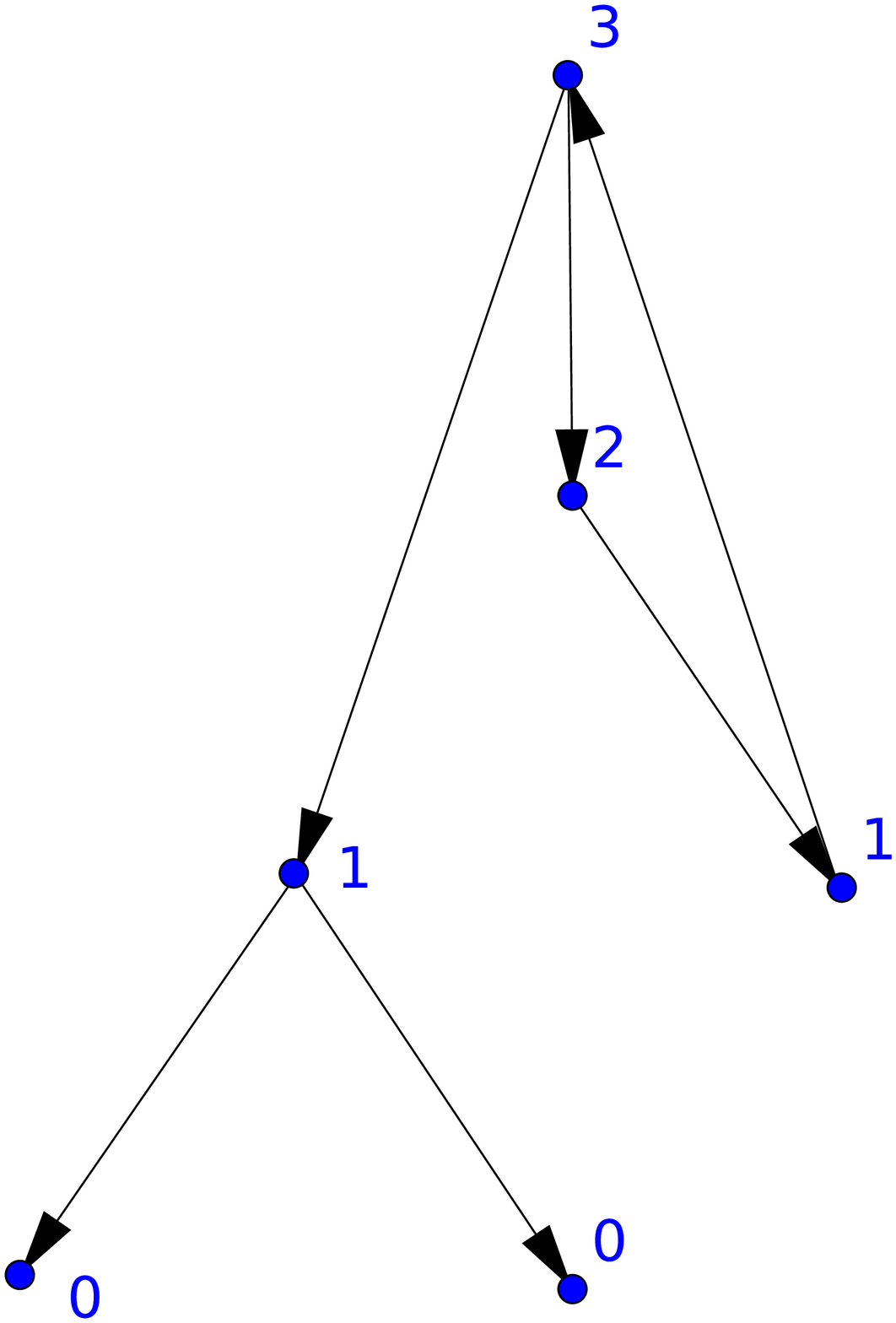}
  \caption{A digraph with \sgf and no kernel}
  \label{f6}
\end{figure}

\newpage

A natural generalization of Theorem 3 is the following result:
\begin{proposition}
  If every induced subdigraph of $D$ has a semi-kernel then $D$ has a \sgf.
\end{proposition}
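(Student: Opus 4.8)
The plan is to adapt, in a recursive fashion, the level construction behind Theorem \ref{teo3} (kernel-perfect $\Rightarrow$ \gf), replacing kernels by semi-kernels throughout. Concretely, I would build a (possibly transfinite) sequence of vertex sets by setting $U_0 := V(D)$ and, as long as $U_i \neq \emptyset$, choosing a non-empty \sk $S_i$ of the induced subdigraph $D[U_i]$—which exists by hypothesis, since $D[U_i]$ is an induced subdigraph of $D$—and putting $U_{i+1} := U_i \setminus S_i$; at limit stages I would take $U_\lambda := V(D) \setminus \bigcup_{i<\lambda} S_i$. Because each $S_i$ is non-empty, the sets $U_i$ strictly decrease, so the process stops at a stage where the $S_i$ form a partition of $V(D)$. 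I then define $s(x) := i$ exactly when $x \in S_i$.

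The verification of the two defining conditions of a \sgf is where the two features of a \sk enter, each in a clean way. For condition (\ref{def:inciso1}), suppose $s(x) = k$, i.e. $x \in S_k \subseteq U_k$. Every vertex carrying value $k$ lies in $S_k$, and $S_k$ is independent in the induced digraph $D[U_k]$; hence $x$ has no out-neighbour in $S_k$, so no $y \in \Gamma^+(x)$ has $s(y) = k$. For condition (\ref{def:inciso2}), suppose $s(x) = k$ and $y \in \Gamma^+(x)$ with $s(y) = m > k$. Since the $U_i$ are nested and $y \in S_m \subseteq U_m \subseteq U_{k+1} = U_k \setminus S_k$, both $x$ and $y$ lie in $U_k$, so the arc $(x,y)$ is an arc of $D[U_k]$ and is in particular an $S_k y$-arc. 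The \sk property of $S_k$ in $D[U_k]$ then yields a $y S_k$-arc, i.e. some $z \in S_k = s^{-1}(k)$ with $(y,z) \in A(D)$, which is exactly the required $z \in \Gamma^+(y)$ with $s(z) = k$.

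The step I expect to be the main obstacle is not the verification above—once the construction is in place both conditions fall out almost immediately—but rather guaranteeing that the recursion is genuinely well-defined and exhausts $V(D)$. This hinges on two points: first, that the semi-kernels supplied by the hypothesis may be taken non-empty (so that $U_{i+1} \subsetneq U_i$ and no vertex is left unlabelled), which is the same non-triviality convention under which Theorem \ref{teo1} is meaningful; and second, that the hereditary hypothesis is used in full strength, being applied not only to $D$ but to every residual subdigraph $D[U_i]$ that the construction produces—this is the exact analogue of how kernel-perfectness, rather than the mere existence of one kernel, is needed in Theorem \ref{teo3}. For finite $D$ the recursion is an ordinary induction on $|V(D)|$ and the labels manifestly lie in $\na$; in the general case one argues by transfinite recursion and checks, just as for the \gf in Theorem \ref{teo3}, that the levels reached stay within $\na$.
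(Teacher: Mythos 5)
Your proposal is correct and is essentially the paper's own proof: both arguments iteratively remove a semi-kernel of the residual induced subdigraph (guaranteed by the hereditary hypothesis), label each vertex by the stage at which it is removed, and verify the two defining conditions of a \sgf in exactly the same way (independence of $S_k$ in $D[U_k]$ for condition (1), the semi-kernel property of $S_k$ in $D[U_k]$ for condition (2)). The only difference is that you make explicit the termination and non-emptiness issues (and the transfinite/infinite case), which the paper leaves implicit.
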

\begin{proof}
  Let $D$ be such a digraph and let $s_0$ a semi-kernel of $D$. Let $S_1$ a semi-kernel of
  $D_1:=D[V(D)\setminus S_0]$. Let $S_2$ be a semi-kernel of $D_2:=D[V(D)\setminus
  (S_0\cup S_1)]$, etc. The sets $S_i$ form a partition of $V(D)$. We define an integer
  function $s(x)$ on $V(D)$ by:
  \[
    s(x)=k \text{ if and only if } x\in S_k
  \]

  Now we show that $s$ is a semi-Grundy function:
  \begin{enumerate}
  \item Let $x\in V(D)$ with $s(x)=k$ and $y\in\Gamma^+(x)$. We must show that
    $s(y)\not=k$.
    From the definition of $s$ we know that $x\in S_k$ and that $S_k$ is a semi-kernel of
    $D_k$. Since $(x,y)\in A(D)$ then $y\not\in S_k$ and $s(y)\not=k$.
  \item Suppose that $s(x)=k$, $y\in\Gamma^+(x)$ and $s(y)>k$. We must show that there
    exists $z\in\Gamma^+(y)$ such that $s(z)=k$.

    Since $s(x)=k$ we know that $x\in S_k$ with $S_k$ a semi-kernel of $D_k$. By
    hypothesis we have that $s(y)>k$, this tell us that $y\in V(D_k)$. Since $(x,y)\in
    A(D_k)$ and $S_k$ is a semi-kernel of $D_k$ it follows that exists $z\in\Gamma^+(y)$
    with $z\in S_k$. Then, by the definition of $s$, $s(z)=k$.         
  \end{enumerate}

  Then $s$ is a semi-Grundy function on $D$.
\end{proof}

As a generalization of  Theorem \ref{teo_berge}, we can prove that the cartesian sum of digraphs
having \sgf, also has \sgf. There is an important difference in the proof of this theorem; in \cite{cB85} for the proof
of Theorem \ref{teo_berge} C. Berge use the digital sum. In our proof, we do not use the
digital sum, just the ordinary sum of the integers.

\begin{proposition}
  Let $D_i$ be  digraphs, for $i=1,\ldots, n$. If $D_i$ has a \sgf $s_i$ for $i=1,\ldots, n$ then the function
  \[
    \displaystyle{S(x_1,\ldots,x_n):=\sum_{i=1}^ns_i(x_i)}
  \]
  is a \sgf of $D:=D_1+D_2+\ldots+D_n$. The maximum value of $S$ is
  $\displaystyle{\sum_{i=1}m_i}$, where $m_i$ is the maximum value of $s_i$.
\end{proposition}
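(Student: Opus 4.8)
The plan is to reduce each of the two defining conditions of a \sgf on $D:=D_1+\cdots+D_n$ to the corresponding condition on a single factor, exploiting the fact that every arc of the cartesian sum changes exactly one coordinate. Write $\mathbf{x}=(x_1,\ldots,x_n)$. From Berge's definition, $\mathbf{y}\in\Gamma^+(\mathbf{x})$ if and only if $\mathbf{y}$ agrees with $\mathbf{x}$ in all but one coordinate $i$, with $y_i\in\Gamma^+(x_i)$ in $D_i$. Consequently $S(\mathbf{y})-S(\mathbf{x})=s_i(y_i)-s_i(x_i)$: the global value of $S$ changes only through the change in the $i$-th factor. This identity is the engine of the whole argument, and everything else is bookkeeping.

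For the first condition, suppose $S(\mathbf{x})=k$ and $\mathbf{y}\in\Gamma^+(\mathbf{x})$ differs from $\mathbf{x}$ in coordinate $i$. Since $y_i\in\Gamma^+(x_i)$ and $s_i$ is a \sgf, its first condition in $D_i$ gives $s_i(y_i)\neq s_i(x_i)$; by the displayed identity $S(\mathbf{y})-S(\mathbf{x})=s_i(y_i)-s_i(x_i)\neq 0$, hence $S(\mathbf{y})\neq k$, as required.

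For the second condition, suppose $S(\mathbf{x})=k$, that $\mathbf{y}\in\Gamma^+(\mathbf{x})$ differs in coordinate $i$, and that $S(\mathbf{y})>k$. The identity forces $s_i(y_i)>s_i(x_i)$, so the second condition applied to the factor $s_i$ supplies a $z_i\in\Gamma^+(y_i)$ with $s_i(z_i)=s_i(x_i)$. I would then define $\mathbf{z}$ to be $\mathbf{y}$ with its $i$-th coordinate replaced by $z_i$; since $\mathbf{z}$ differs from $\mathbf{y}$ only in coordinate $i$ and $z_i\in\Gamma^+(y_i)$, we have $\mathbf{z}\in\Gamma^+(\mathbf{y})$. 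Telescoping the two single-coordinate changes yields $S(\mathbf{z})=S(\mathbf{x})-s_i(x_i)+s_i(z_i)=S(\mathbf{x})=k$, which is exactly what the condition demands. The claim about the maximum then follows because the coordinates are chosen independently across factors, so $\max S=\sum_i m_i$.

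The computation is short and there is no serious obstacle; the only points that require care are organizational. First, one must ensure the correcting step takes place in the \emph{same} factor $i$ as the original arc, so that the two modifications of $S$ cancel exactly — correcting a different coordinate would not eliminate the surplus $S(\mathbf{y})-k$. Second, one should confirm that $\mathbf{z}$ is a genuine out-neighbour of $\mathbf{y}$ in the cartesian sum (it is, since it differs in a single coordinate with $z_i\in\Gamma^+(y_i)$), and that the first condition for $s_i$ legitimately excludes the equality case, leaving $s_i(y_i)>s_i(x_i)$ as the sole situation the second condition must handle.
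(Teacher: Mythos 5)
Your proposal is correct and follows essentially the same route as the paper's proof: both exploit that an arc in the cartesian sum changes exactly one coordinate, deduce condition (1) from $s_i(y_i)\neq s_i(x_i)$ in that factor, and verify condition (2) by applying the factor's second condition to produce a correcting out-neighbour in the \emph{same} coordinate, so that the two changes cancel and $S(\mathbf{z})=k$. Your remark on the maximum value (independence of the coordinates gives $\max S=\sum_i m_i$) is in fact slightly more explicit than the paper, which leaves that claim unargued.
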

\begin{proof} 
  Let $D_i$ and $s_i$ be as in the hypothesis. We will prove that $S$ is a \sgf of $D:=D_1+D_2+\ldots+D_n$. 
  \begin{enumerate}
  \item Let $x=(x_1,\ldots,x_j,\ldots,x_n)\in D$ such that
    $S(x)=s_1(x_1)+\ldots+s_n(x_n)=k$ and $y\in \Gamma^+(x)$. We must show that $S(y)\not=k$.
    
    Since $y\in \Gamma^+(x)$, from the definition of $D$, we have that
    $y=(x_1,\ldots,x'_j,\ldots,x_n)$ for some $j\in \{1,\ldots,n\}$, where
    $(x_j,x'_j)\in A(D_j)$. Since $s_j$ is a \sgf of $D_j$, $s_j(x_j)\not=s_j(x'_j)$ and
    we have:
    \begin{equation*}
      \begin{array}{rcl}
        S(y) & = & S(x_1,\ldots,x'_j,\ldots,x_n)\\
        & = & s_1(x_1)+\ldots+s_j(x'_j)+\ldots+s_n(x_n)\\
        & \not= & k. 
        \end{array}
    \end{equation*}    
  \item Suppose that $S(x)=k$, $y\in\Gamma^+(x)$ and $S(y)>k$. We must show that there
    exists $z\in\Gamma^+(y)$ such that $S(z)=k$.\\
    Let $x=(x_1,\ldots,x_j,\ldots,x_n)\in D$ and $y\in\Gamma^+(x)$, then from the
    definition of $D$ we know that $y=(x_1,\ldots,x'_j,\ldots,x_n)$. 
    From the definition of $S$, we have:
    \begin{equation*}
      \begin{array}{rcl}
        S(y) & = & S(x_1,\ldots,x'_j,\ldots,x_n)\\
        & = & s_1(x_1)+\ldots+s_j(x'_j)+\ldots+s_n(x_n)\\
        & > & k (\text{ hypothesis})
        \end{array}
    \end{equation*}    
    This implies that $s_j(x_j)<s_j(x'_j)$ and since $s_j$ is a \sgf of $D_j$, there
    exists $x''_j\in \Gamma^+_{D_j}(x'_j)$ such that $s_j(x''_j)=s_j(x_j)$. Then the
    vertex $z=(x_1,\ldots,x''_j,\ldots,x_n)$ is in $y\in\Gamma^+(x)$ and 
    \begin{equation*}
      \begin{array}{rcl}
        S(z) & = & S(x_1,\ldots,x''_j,\ldots,x_n)\\
             & = & s_1(x_1)+\ldots+s_j(x''_j)+\ldots+s_n(x_n)\\
             & = & s_1(x_1)+\ldots+s_j(x_j)+\ldots+s_n(x_n)\\
             & = & k 
        \end{array}
    \end{equation*}        
  \end{enumerate}
  Then $S$ is a \sgf of $D$.
\end{proof}

\begin{remark}
  Now consider that $D$ is a digraph and has a \sgf $S'$. From the definition of \sgf its
  minimum value could be any integer. It would be easy to work with, if we know which is
  its minimum value, as happen with the Grundy functions, and which is next value, and so
  on.  With this purpose, we define a new \sgf, $S$, in the following way: Suppose that the
  image of $S'$ is $\{m_0, m_1,\ldots,m_r\}$ where $m_i<m_j$ if and only if
  $i<j$. Then:
  \begin{center}
    $ S:V(D)\to \na$,\\ $S(x)=k$ if and only if $S'(x)=m_k$.
  \end{center}
  It is easy to see that this a \sgf of $D$ and induced the same partition in independent
  subsets of $V(D)$ that $S'$. This \sgf starts in $0$ and take consecutive positive integer
  values. So, from now on, we will assume that every \sgf take consecutive non-negative
  integer values starting from $0$.
\end{remark}

\section{\sgf on the cartesian product.}

The cartesian product is an operation on digraphs that comes naturally
from the Theory of Games. In \cite{hGrG10}, we studied the behavior of the \gf and the
cartesian product, and in some cases we found some bounds of Grundy functions. In this
section we generalizes those results to the concept of \sgf.

\begin{theorem}\label{teo:1}
  Let $D$ be a digraph and $\alpha=(\alpha_v)_{v\in V(D)}$ a family of mutually disjoint
  digraphs. If $D$ is a kernel-perfect digraph and each $\al_v$ has a semi-Grundy function, then
  $\sz$ possesses a semi-Grundy function. 
\end{theorem}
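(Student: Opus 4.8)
The plan is to build the required \sgf on \sz by peeling off one \sk at a time, in the spirit of the proof of the previous Proposition, but using the product structure of \sz to produce the semi-kernels explicitly. The construction is recursive: I remove a carefully chosen \sk $S_0$, check that what remains is again a Cartesian product of the same shape satisfying the hypotheses, and recurse (by induction on $|V(\sz)|$ in the finite case, by transfinite recursion otherwise). Throughout I use two auxiliary facts. First, kernel-perfectness is inherited by induced subdigraphs, so every $D[W]$ again has a kernel. Second, if $H$ has a \sgf $s$ then so does the digraph obtained from $H$ by deleting its bottom level $s^{-1}(0)$: one simply restricts $s$; condition (1) is inherited, and condition (2) is preserved because the witness $z$ guaranteed by $s$ satisfies $s(z)=s(x)>0$ and hence survives. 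By the preceding Remark I may assume every \sgf takes consecutive values starting at $0$.

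The heart of the argument is the claim that, if $K$ is a kernel of $D$ and for each $v\in K$ we set $T_v:=s_v^{-1}(0)$ (the bottom level of the \sgf $s_v$ of $\alv$), then $S_0:=\bigcup_{v\in K}T_v$ is a \sk of \sz. First, $S_0$ is independent: each $T_v$ is independent, being a \sk of $\alv$ by the Lemma, and for distinct $v,v'\in K$ there is no arc between $\alv$ and $\alpha_{v'}$ because $K$ is independent in $D$. Second, I must show that whenever $x\in S_0$ and $y\in\Gamma^+(x)$ there is $z\in\Gamma^+(y)\cap S_0$. If the arc $x\to y$ lies inside a single copy $\alv$, the \sk property of $T_v$ in $\alv$ supplies such a $z$ inside $\alv$. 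If instead $x\in\alv$ and $y\in\alpha_w$ with $(v,w)\in A(D)$, then $w\notin K$ (as $v\in K$ and $K$ is independent), so the kernel property yields $v'\in K$ with $(w,v')\in A(D)$; consequently the whole copy $\alpha_{v'}$, and in particular the nonempty set $T_{v'}\subseteq S_0$, is contained in $\Gamma^+(y)$, and any $z\in T_{v'}$ works. I expect this cross-copy case to be the main obstacle, and it is exactly where kernel-perfectness (rather than the mere existence of a \sk in $D$) is the convenient hypothesis: I need every out-neighbour $w$ of a chosen vertex to be dominated back into $K$.

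Next I would verify that deleting $S_0$ returns a digraph of the same type. The only vertices removed lie in copies $\alv$ with $v\in K$, from which the bottom level $T_v$ is deleted; hence the remaining digraph is $\sigma(D[W],\beta)$, where $W=\{v: V(\alv)\setminus S_0\neq\emptyset\}$, with $\beta_v=\alv\setminus T_v$ for $v\in K$ and $\beta_v=\alv$ otherwise. By heredity of kernel-perfectness $D[W]$ is again kernel-perfect, and by the bottom-level fact each $\beta_v$ still carries a \sgf; moreover $S_0\neq\emptyset$, so strictly fewer vertices remain and the recursion is well founded.

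Finally I would assemble the function. By the inductive hypothesis $\sigma(D[W],\beta)$ has a \sgf $S'$ with values $0,1,\dots$; define $S:=0$ on $S_0$ and $S:=S'+1$ elsewhere. Checking that $S$ is a \sgf of \sz is then routine and mirrors the Proposition: for condition (1) one splits according to whether an arc touches $S_0$, using independence of $S_0$ in the boundary cases and condition (1) for $S'$ otherwise; for condition (2), if the tail $x$ lies in $S_0$ then $S(x)=0$, $S(y)>0$ forces $y\notin S_0$, and the required $z$ of value $0$ is precisely the \sk witness provided by $S_0$, while if $x\notin S_0$ then $S(y)>S(x)\ge 1$ forces $y\notin S_0$ as well and condition (2) for $S'$ applies after subtracting $1$. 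This yields a \sgf of \sz and completes the construction.
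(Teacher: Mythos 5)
Your proposal is correct and follows essentially the same construction as the paper: peel off the bottom levels $s_v^{-1}(0)$ of the fibers indexed by a kernel of $D$, use heredity of kernel-perfectness to obtain a kernel of the surviving index digraph, and stack the successive levels to build the function. The paper simply unrolls your recursion explicitly (defining all the sets $N_i$, $M_i$, $D_i$, $S_i$ at once, with $m(i,y)$ playing the role of your renormalized bottom level, and verifying the two semi-Grundy conditions globally), whereas you package the identical peeling as an induction with a semi-kernel claim and a $+1$ shift; the resulting function is the same.
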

\begin{proof}
  Let $D$ and $\al=(\alv)_{v\in V(D)}$ as in the hypothesis. We consider for each
  $v\in V(D)$ any fixed semi-Grundy function $f_v$ of $\alv$ and $S_0$ a kernel of $D$.
  Now we define the following sets:\\
  $N_0:=\{x\in V(\sz)|f_y(x)=0 \text{ for some } y\in S_0 \}$.\\
  $M_0:=\{y\in V(D)| V(\al_y)\subseteq N_0\}$.\\
  Let $S_1$ a kernel of $D_1:=D[V(D)\setminus M_0]$ (the subdigraph of $D$ induced by
  $V(D)\setminus M_0$). For $y\in S_1$ we denote: $m(1,y):=\min \{f_y(x)|x\in
  V(\al_y)\setminus N_0\}$.\\
  $N_1:=\{x\in \big(V(\sz)\setminus N_0\big)|f_y(x)=m(1,y) \text{ for some } y\in S_1 \}$.\\
  $M_1:=\{y\in V(D)| V(\al_y)\subseteq (N_0\cup N_1)\}$.\\(see figure \ref{suma_cartesiana})

\begin{center}
  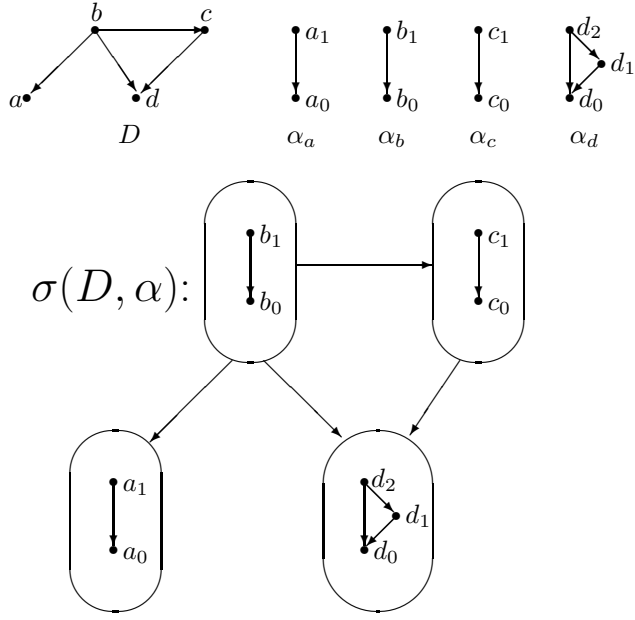
\begin{figure}[ht]
    \setlength{\unitlength}{0.6mm}
    \begin{picture}(120,150)
      \put(52,90){           
        \put(-63,35){\circle*{2}}      
        \put(-39,35){\circle*{2}}
        \put(-48,50){\circle*{2}}
        \put(-24,50){\circle*{2}}
        \multiput(-48,50)(24,0){2}{\vector(-1,-1){14}} 
        \put(-48,50){\vector(1,0){24}}
        \put(-48,50){\vector(2,-3){9}} 
        \put(-43,25){$D$}
        \put(-67,33){$a$}
        \put(-49,52){$b$}
        \put(-25,52){$c$}
        \put(-37,33){$d$}
        
        \put(-4,50){\circle*{2}}
        \put(-4,35){\circle*{2}}
        \put(-4,50){\vector(0,-1){14}} 
        \put(-6,25){$\al_a$}
        \put(-2,48){$a_1$}
        \put(-2,33){$a_0$}
        
        \put(16,50){\circle*{2}}
        \put(16,35){\circle*{2}}
        \put(16,50){\vector(0,-1){14}}
        \put(14,25){$\al_b$}
        \put(18,48){$b_1$}
        \put(18,33){$b_0$}

        \put(36,50){\circle*{2}}
        \put(36,35){\circle*{2}}
        \put(36,50){\vector(0,-1){14}}
        \put(34,25){$\al_c$}
        \put(38,48){$c_1$}
        \put(38,33){$c_0$}
        
        \put(56,50){\circle*{2}}
        \put(56,35){\circle*{2}}
        \put(63,42.5){\circle*{2}}
        \put(56,50){\vector(0,-1){14}}              
        \put(56,50){\vector(1,-1){6.5}}
        \put(63,42.5){\vector(-1,-1){6.5}}
        \put(56,25){$\al_d$}
        \put(57.5,49){$d_2$}
        \put(58,33){$d_0$}
        \put(65,41){$d_1$}

        \put(-40,-100){
          \put(-4,50){\circle*{2}}
          \put(-4,35){\circle*{2}}
          \put(-4,50){\vector(0,-1){14}} 
          \put(-2,48){$a_1$}
          \put(-2,33){$a_0$}             
        }
        
        \put(-30,-45){
          \put(16,50){\circle*{2}}
          \put(16,35){\circle*{2}}
          \put(16,50){\vector(0,-1){14}}
          \put(18,48){$b_1$}
          \put(18,33){$b_0$}
        }              
        
        \put(0,-45){
          \put(36,50){\circle*{2}}
          \put(36,35){\circle*{2}}
          \put(36,50){\vector(0,-1){14}}
          \put(38,48){$c_1$}
          \put(38,33){$c_0$}
        }

        \put(-45,-100){
          \put(56,50){\circle*{2}}
          \put(56,35){\circle*{2}}
          \put(63,42.5){\circle*{2}}
          \put(56,50){\vector(0,-1){14}}              
          \put(56,50){\vector(1,-1){6.5}}
          \put(63,42.5){\vector(-1,-1){6.5}}
          \put(57.5,49){$d_2$}
          \put(58,33){$d_0$}
          \put(65,41){$d_1$}
        }
        \put(-17,-5){
          \put(3,1.5){\oval(20,40)}
        }
        
        \put(33,-5){
          \put(3,1.5){\oval(20,40)}
        }

        \put(-46.5,-60){
          \put(3,1.5){\oval(20,40)}
        }

        \put(11,-60){
          \put(3,1.5){\oval(24,40)}
        }
        \put(-4,-2){\vector(1,0){30}}
        \put(-11,-23){\vector(1,-1){17}}
        \put(-18,-23){\vector(-1,-1){18}}
        \put(32,-23){\vector(-2,-3){11}}
        \put(-62,-10){\huge{$\sz$:}}
        
      }
    \end{picture}
    \caption{In this example, the Grundy function of the digraphs $\al_w$ are
      given by the subscript of each vertex and the arcs between the ovals means
      that there must be an arc from any vertex of the source to any vertex in the
      target: $S_0=\{a,d\}, N_0=\{a_1,d_1\}, M_0=\emptyset$;
      $S_1=\{a,d\}, m(1,a)=1, m(1,d)=1, N_1=\{a_2,d_2\}, M_1=\{a\}; S_2=\{d\},
      m(2,d)=2, N_2=\{d_2\}$...}
    \label{suma_cartesiana}
  \end{figure}    
\end{center}
  
     Clearly $N_0\cap N_1=\emptyset$, $N_0\not=\emptyset$ and $N_1\not=\emptyset$.\\
     Continuing this way we define a sequence of subsets of vertices of $\sz$ and
     $D$ as follows: if $D_i, N_i$ and $M_i$ are defined and $S_i$ is a kernel of
     $D_i$ then we
     defined $D_{i+1}, S_{i+1}, N_{i+1}$ and $M_{i+1}$ as follows:\\
     $D_{i+1}:=D[V(D)\setminus\big(M_0\cup\ldots\cup M_i\big)]; S_{i+1}$ is a kernel
     of
     $D_{i+1}$;\\
     $m(i+1,y):=\min\{f_y(x)|x\in \big(V(\al_y\setminus (N_0\cup\ldots\cup
     N_i)\big)\}$ for
     $y\in S_{i+1}$;\\
     $N_{i+1}:=\{x\in \big(V(\sz)\setminus (N_0\cup\ldots\cup
     N_i)\big)|f_y(x)=m(i+1,y) \text{ for some } y\in S_{i+1}\}$.
     $M_{i+1}:=\{y\in V(D)|V(\al_y)\subseteq (N_0\cup\ldots \cup N_{i+1})\}$.

  Clearly $N_i\cap N_j=\emptyset$ for any $i, j $ with $i\not= j$. This procedure finishes
  when we get the first natural number $r$ such that $V(D_r)=\emptyset$. Notice that this
  natural number $r$ exists as $N_i\cap N_j=\emptyset$ whenever $i\not= j$ and
  $N_i\not=\emptyset$ for each $0\leq i\leq r-1$.\\
  Now we define the function $S:V(\sz)\to \na$ as follows:
  \[
    S(x)=k \text{ if and only if } x\in N_k.
  \]
  $S$ is well defined as $N_i\cap N_j=\emptyset$ for any $i, j $ with $i\not= j$ and
  $V(D_r)=\emptyset$. 
  We will prove that $S$ is a semi-Grundy function on $\sz$.  
  \begin{enumerate}
   \item Let $x\in V(\sz)$ such that $S(x)=k$ and $y\in\Gamma^+(x)$. We must show that
    $S(y)\not=k$.\\
    By definition of $\sz$ there exists  vertices $u, v\in V(D)$ such that $x\in V(\al_u)$
    and $y\in V(\al_v)$.
    Suppose that $S(x)=S(y)=k$. If $u=v$ this implies that $f_u(x)=f_u(y)$ and then
    $y\not\in \Gamma^+(x)$ (because $f_u^{-1}(k)$ is an independent set). This contradicts
    the assumption $y\in\Gamma^+(x)$ then, in this case, $S(y)\not=k$.

    Now suppose that $S(x)=S(y)=k$ and $u\not=v$. From the definition of the sets $S_k$,
    we know $S_k$ is a semi-kernel of $D_k$ and that $u, v\in S_k$. Since $S_k$ is an
    independent subset of $V(D)$ it follows that there is no arc between $u$ and $v$ in
    $D$, and then by definition of $\sz$ there is no arc between any vertex from $\al_u$
    to any vertex of $\al_v$ or viceversa. In particular, there is no arc between $x$ and
    $y$. This contradicts the hypothesis that $y\in\Gamma^+(x)$. Thus in this case also it
    satisfies $S(y)\not=k$.
    
  \item Suppose that $S(x)=k$ and $y\in\Gamma^+(x)$ with $S(y)>k$. We must show that there
    exists $z\in\Gamma^+(y)$ such that $S(z)=k$.

    By definition of $\sz$ there exists $u, v\in V(D)$ such that $x\in V(\al_u)$ and
    $y\in V(\al_v)$. First we prove the case when $u=v$.  In this case, $S(x)=k<S(y)$
    implies that $f_u(x)<f_u(y)$ (because $f_u$ is a \sgf) and then, must exists
    $z\in \Gamma^+_{\al_u}(y)$ such that $f_u(z)=f_u(x)$. From the definition of
    $\sz$ and $S$, we have $S(z)=k=S(x)$ and $z\in \Gamma^+(y)$.
     
    When $u\not=v$, by definition of $\sz$ and the fact there is $xy$ arc in $\sz$,
    there is a $uv$ arc in $D$. Since $S(x)=k<S(y)$, we know that $u$ is an element of
    $S_k$ that is a kernel of $D_k$ and $v\in V(D_k)$. Then must exists $w\in S_k$ such
    that $(v,w)\in A(D_k)$. From this and the definition of $S$, must exists $z\in
    V(\al_w)$ such that $S(z)=k$ and $(y,z)\in A(\sz)$.    
  \end{enumerate}
  Then $S$ is a \sgf of $\sz$.
\end{proof}

\begin{coro}\label{teo:2}
  Let $D$ be a digraph and $\alpha=(\alpha_v)_{v\in V(D)}$ a family of mutually disjoint
  digraphs. If $D$ is a kernel-perfect digraph and each $f_u$ has a semi-Grundy
  function, then $\sz$ possesses a semi-Grundy function $S$ such that
  $\displaystyle{\max \{S(x)|x\in V(\sz)\}\leq \sum_{u\in V(D)}m_u+|V(D)|-1}$, where
  $m_u=\max \{f_u(x)|x\in V(\al_u)\}$.
\end{coro}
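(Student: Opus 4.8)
The plan is to observe that this corollary is really a bookkeeping statement about the construction already carried out in the proof of Theorem \ref{teo:1}, so no new semi-Grundy function needs to be built: I take exactly the function $S$ produced there, where $S(x)=k$ if and only if $x\in N_k$ and the nonempty levels are $N_0,N_1,\ldots,N_{r-1}$. Consequently $\max\{S(x)\mid x\in V(\sz)\}=r-1$, and the whole problem reduces to bounding the number of steps $r$ of the construction by $\sum_{u\in V(D)}m_u+|V(D)|$.

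The central observation is that each vertex $u\in V(D)$ can be selected into the kernels $S_i$ only a bounded number of times. Indeed, whenever $u\in S_i$ the set $N_i$ absorbs precisely those vertices of $\al_u$ whose $f_u$-value equals $m(i,u)=\min\{f_u(x)\mid x\in V(\al_u)\setminus(N_0\cup\cdots\cup N_{i-1})\}$; that is, exactly one value-level of $f_u$ is consumed, and it is the smallest value of $f_u$ not yet consumed. Hence if $u$ is selected at steps $i_1<i_2<\cdots$, the consumed values $m(i_1,u)<m(i_2,u)<\cdots$ are strictly increasing. Since, by the normalization in the Remark, $f_u$ takes exactly the consecutive values $0,1,\ldots,m_u$, there are only $m_u+1$ values to consume; therefore $|\{i\mid u\in S_i\}|\le m_u+1$, and once all of them are used $V(\al_u)\subseteq N_0\cup\cdots\cup N_i$, so $u\in M_i$ and $u$ leaves every later $D_j$.

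I then finish by a double count of the incidences between the steps and the vertices of $D$. For each $i$ with $0\le i\le r-1$ the digraph $D_i$ is nonempty (this is exactly what fails at step $r$), and a kernel of a nonempty digraph is nonempty, so $|S_i|\ge 1$. Counting the pairs $(i,u)$ with $u\in S_i$ in the two possible ways gives
\[
  r=\sum_{i=0}^{r-1}1\le\sum_{i=0}^{r-1}|S_i|=\sum_{u\in V(D)}|\{i\mid u\in S_i\}|\le\sum_{u\in V(D)}(m_u+1)=\sum_{u\in V(D)}m_u+|V(D)|.
\]
Hence $\max\{S(x)\mid x\in V(\sz)\}=r-1\le\sum_{u\in V(D)}m_u+|V(D)|-1$, as claimed.

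The step I expect to require the most care is the bound $|\{i\mid u\in S_i\}|\le m_u+1$: one must check that each appearance of $u$ in a kernel genuinely uses up a new $f_u$-value (so that appearances correspond to a strictly increasing sequence of consumed values), and that $u$ is automatically discarded into some $M_i$ once its last value is consumed. Everything else — the nonemptiness of the $S_i$ and the double counting — is routine.
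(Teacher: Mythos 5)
Your proof is correct and follows essentially the same route as the paper: both take the semi-Grundy function $S$ constructed in the proof of Theorem \ref{teo:1} and bound its maximum value by observing that each nonempty level $N_i$ consumes at least one entire value-level $f_u^{-1}(j)$ of some factor, of which there are only $\sum_{u\in V(D)}(m_u+1)$ in total. Your double-counting write-up (kernels of nonempty digraphs are nonempty, each appearance of $u$ in some $S_i$ consumes the smallest unconsumed value of $f_u$, hence at most $m_u+1$ appearances) is in fact considerably more detailed than the paper's two-line sketch, which only notes that each $S^{-1}(i)$ is a union of sets $f_v^{-1}(j)$ and that the bound is attained when each such union is a single level set.
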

\begin{proof}
  Since $D$ is a kernel-perfect digraph then $D$ has a \sgf $f$ (indeed is a Grundy
  function). Let $S$ be the \sgf of $\sz$ defined in the proof of Theorem
  \ref{teo:1}. Since $S^{-1}(i)=\cup f^{-1}(j)$ for some $j\in\na$ and $v\in V(D)$ we have
  that the bound reaches whenever $S^{-1}(i)=f^{-1}(j)$ for some $j\in\na$ and $v\in
  V(D)$. 
\end{proof} 

\begin{theorem} 
  Let $D$ be a digraph and $\alpha=(\alpha_v)_{v\in V(D)}$ a family of mutually disjoint
  digraphs. If $\sz$ has a \sgf then $D$ has a semi-kernel and $\al_v$ has a \sgf for each
  $v\in V(D)$.   
\end{theorem}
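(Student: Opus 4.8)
The plan is to base everything on a single structural observation about how a \sgf $S$ of $\sz$ distributes its values across the fibers $V(\al_v)$, and then read off both conclusions. First I would establish the following.

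\emph{Key observation.} If $(u,v)\in A(D)$ then $S$ shares no value between $\al_u$ and $\al_v$, i.e. $\{S(x):x\in V(\al_u)\}\cap\{S(y):y\in V(\al_v)\}=\emptyset$. This is immediate from the definition of $\sz$: every $x\in V(\al_u)$ and every $y\in V(\al_v)$ are joined by the arc $(x,y)$, so $y\in\Gamma^+(x)$, and condition~\ref{def:inciso1} forces $S(y)\neq S(x)$.

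\emph{Each $\al_v$ has a \sgf.} I would show that the restriction $s_v:=S|_{V(\al_v)}$ works. Condition~\ref{def:inciso1} transfers directly, since $\Gamma^+_{\al_v}(x)\subseteq\Gamma^+(x)$. For condition~\ref{def:inciso2}, suppose $x,y\in V(\al_v)$ with $y\in\Gamma^+_{\al_v}(x)$ and $s_v(x)=k<s_v(y)$. Applying condition~\ref{def:inciso2} of $S$ in $\sz$ produces $z\in\Gamma^+(y)$ with $S(z)=k$. In $\sz$ the out-neighbourhood splits as $\Gamma^+(y)=\Gamma^+_{\al_v}(y)\cup\bigcup_{w:(v,w)\in A(D)}V(\al_w)$; if $z$ lay in some $V(\al_w)$ with $(v,w)\in A(D)$, the Key observation would give $S(z)\neq k$ (because $k=S(x)$ is a value attained on $\al_v$), a contradiction. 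Hence $z\in\Gamma^+_{\al_v}(y)$ and $s_v(z)=k$, so $s_v$ is a \sgf of $\al_v$.

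\emph{$D$ has a \sk.} Let $m_0:=\min\{S(x):x\in V(\sz)\}$ and put $T:=\{v\in V(D): S(x)=m_0 \text{ for some } x\in V(\al_v)\}$, which is nonempty. Independence of $T$ is exactly the Key observation: adjacent $u,v\in T$ would share the value $m_0$. For the \sk\ property I would take $u\in T$ and $(u,w)\in A(D)$ and produce an out-neighbour of $w$ in $T$. Choosing $x\in V(\al_u)$ with $S(x)=m_0$ and any $y\in V(\al_w)$, we have $y\in\Gamma^+(x)$, so condition~\ref{def:inciso1} and minimality give $S(y)>m_0$, and condition~\ref{def:inciso2} yields $z\in\Gamma^+(y)$ with $S(z)=m_0$. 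If $z\in V(\al_w)$ then $w\in T$, contradicting the independence of $T$ (since $(u,w)\in A(D)$); hence $z\in V(\al_t)$ for some $t$ with $(w,t)\in A(D)$, and $S(z)=m_0$ forces $t\in T$. Thus $w$ has an out-neighbour in $T$, so $T$ is a \sk\ of $D$.

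The delicate point, and the one I expect to be the main obstacle, is condition~\ref{def:inciso2} for the restriction $s_v$: a priori the witness $z$ supplied by $S$ in the product could live in a neighbouring fiber $\al_w$ instead of in $\al_v$, which would wreck the restriction. The Key observation is precisely what traps $z$ back inside $\al_v$, and it is also the engine driving both the independence and the absorbing property of $T$; everything else is routine bookkeeping about out-neighbourhoods in $\sz$.
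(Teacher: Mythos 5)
Your proposal is correct and takes essentially the same approach as the paper: the paper's semi-kernel is likewise the set of vertices of $D$ whose fiber meets the minimum level set of $S$ (the paper uses the value $0$, having normalized its semi-Grundy functions, where you use $m_0$), and the paper likewise restricts $S$ to each fiber $\al_u$. The argument that traps the witness $z$ inside the fiber appears inline in the paper (if $z$ escaped to a neighbouring fiber, then $(x,z)$ would be an arc with $S(x)=S(z)$), which is exactly what you isolate as your Key observation.
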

\begin{proof}
  First, we prove that $D$ has a \sk. Let $S$ be a \sgf of $\sz$; $N:=\{x\in
  V(\sz)|S(x)=0\}$ and $A:=\{u\in V(D)|V(\al_u)\cap N\not=\emptyset\}$.

  {\bf{$A$ is a \sk of $D$}}:\\
  Let $u, v\in A$, with $u\not=v$. Then there exists $x\in V(\al_u)$ and $w\in
  V(\al_v)$ such that $S(x)=0=S(w)$. Since $S$ is a \sgf in $\sz$ it follows that there is
  no arc in $\sz$ between $x$ and $w$. Hence from the definition of $\sz$,
  there is no arc between $u$ and $v$ in $D$. Then $A$ is an independent set of $D$. \\
  Let $u \in A$ and $w\in (V(D)\setminus A)$ such that $(u,w)\in A(D)$. Let $x\in
  V(\al_u)$ such that $S(x)=0$ and $y\in V(\al_w)$. From the definition of $\sz$ and since
  $S$ is a \sgf it follows that $S(y)>0$. Then, there exists $z\in \Gamma^{+}(y)$ such
  that $S(z)=0$. From the definition of $\sz$, there exists $v\in V(D)$ such that $z\in
  V(\al_v)$;  then $v\in A$, and  $(w,v)\in A(D)$. Thus $A$ is a \sk of $D$.

  Now let $u\in V(D)$ be. We will prove that $\al_u$ has a \sgf.  Since $S$ is a \gf
  of $\sz$, there exists a subset of integers
  $\{i_0,i_1,\ldots,i_r\}$ such that $S^{-1}(i_j)\cap V(\al_u)\not=\emptyset$, with
  $i_j<i_{j+1}$, for every $j=0,1,...,r-1$ and such that
  $\displaystyle{V(\al_u)\subseteq \bigcup_{j\in
      \{i_0,i_1,\ldots,i_r\}}S^{-1}(j)}$. Consider the function $s_u:V(\al_u)\to \na$
  given by $s_u(x)=j$ if and only if $x\in S^{-1}(j)$. By definition of $s_u$
  and since $S$ is a \sgf, follows that $s_u$ is well defined. Now we need to prove
  that is a \sgf of $\al_u$.

  \begin{enumerate}[1)]
  \item Let $x \in V(\al_u)$ be such that $s_u(x) = j$ and $y\in
    \Gamma_{\al_u}^{+}(x)$. Since $(x,y)\in A(\sz)$ then $S(x)\not=S(y)$, i.e,
    $s_u(x)=S(x)\not=S(y)=s_u(y)$. 
  \item Suppose that $s_u(x)=k$ and $y\in \Gamma_{\al_u}^{+}(x)$ with $s_u(y) > k$. We must show that there
    exists $z \in\Gamma_{\al_u}^{+}(y)$ such that $s_u(z)=k$.
    Since $S$ is a \sgf of $\sz$, there exists $z\in\Gamma_{\sz}^{+}(y)$ such that
    $S(z)=k$. Now, $z\in V(\al_u)$, otherwise $(y,z)\in A(\sz)$ would implied that
    $(x,z)\in A(\sz)$ and $S(x)=S(z)$, and so, $S$ would not be a \sgf of
    $\sz$. Thus, $z\in V(\al_u)$ and $s_u(z)=S(z)=k$.    
  \end{enumerate}
  Then, $s_u$ is a \sgf of $\al_u$.  
\end{proof}

\begin{theorem}
  Let $D$ be a digraph, $\al=(\al_u)_{u\in V(D)}$ a family of mutually disjoint
  digraphs. If $D$ has a \sgf $f$ and each $\al_u$ has a \sgf $s_u$ $(u\in
  V(D))$ such that $\max\{s_u(x)|x\in V(\al_u)\}=\max\{s_v(x)|x\in V(\al_u)\}$ whenever
  $\{u,v\}\subseteq f^{-1}(i)$ for some $i\in \na$. Then  $\sz$ possesses a \sgf $S$ which
  satisfies: $\max\{S(x)|x\in V(\sz)\}=n+\sum_{i=0}^{n}m_i$, where $n=\max\{f(x)|x\in
  V(D)\}$ and $m_i=\max\{s_u(x)|x\in V(\al_v), \text{ and } v\in f^{-1}(i)\}$, for each
  $0\leq i\leq n$.
\end{theorem}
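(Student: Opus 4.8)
The plan is to stack the fiber functions $s_u$ into disjoint blocks of consecutive integers indexed by the levels of $f$. For $0\le i\le n$ set the offset $o_i:=i+\sum_{j=0}^{i-1}m_j$, so that $o_0=0$ and $o_{i+1}=o_i+m_i+1$. For $x\in V(\sz)$, let $u$ be the unique vertex of $D$ with $x\in V(\al_u)$ and define
\[
  S(x):=o_{f(u)}+s_u(x).
\]
By construction the vertices with $f(u)=i$ receive $S$-values in the interval $\{o_i,\ldots,o_i+m_i\}=\{o_i,\ldots,o_{i+1}-1\}$; these $n+1$ intervals are pairwise disjoint and their union is $\{0,\ldots,n+\sum_{i=0}^n m_i\}$. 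Since $n=\max f$ there is a fiber at level $n$ in which $m_n$ is attained, so the maximum value of $S$ equals $o_n+m_n=n+\sum_{i=0}^n m_i$, giving the stated size.

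Before checking the axioms I would record the key consequence of the hypotheses: by the normalization convention every \sgf assumes all of the values $0,1,\ldots$ up to its maximum, and the uniformity assumption makes $m_i$ the common maximum of every $s_u$ with $u\in f^{-1}(i)$. Hence for each level $i$ and each such $u$, the function $s_u$ takes \emph{exactly} the value set $\{0,1,\ldots,m_i\}$. Axiom~(1) is then routine: if $S(x)=S(y)=k$ with $x\in V(\al_u)$, $y\in V(\al_v)$ and $y\in\Gamma^+(x)$, then either $u=v$, so the arc lies in $\al_u$ and axiom~(1) for $s_u$ gives $s_u(x)\ne s_u(y)$; or $u\ne v$, so $(u,v)\in A(D)$ and axiom~(1) for $f$ gives $f(u)\ne f(v)$, placing $S(x)$ and $S(y)$ in distinct blocks. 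Either way $S(x)\ne S(y)$, a contradiction.

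The substance is in axiom~(2). Suppose $S(x)=k$, $y\in\Gamma^+(x)$ and $S(y)>k$, with $x\in V(\al_u)$ and $y\in V(\al_v)$. If $u=v$, comparing offsets gives $s_u(y)>s_u(x)$, and axiom~(2) for $s_u$ supplies $z\in\Gamma^+_{\al_u}(y)$ with $s_u(z)=s_u(x)$, so $S(z)=k$. If $u\ne v$ then $(u,v)\in A(D)$, and a block comparison turns $S(y)>S(x)$ into $f(u)<f(v)$; axiom~(2) for $f$ then yields $w\in\Gamma^+_D(v)$ with $f(w)=f(u)=:i$. The decisive point---and the step I expect to be the real obstacle---is producing $z\in V(\al_w)$ with $S(z)=k$, that is with $s_w(z)=s_u(x)$: a returning successor $w$ at level $i$ does not on its own carry the specific value $s_u(x)$. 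This is exactly where the uniformity hypothesis is used, for $u,w\in f^{-1}(i)$ forces $s_w$ to realize every value in $\{0,\ldots,m_i\}$, in particular $s_u(x)$. Finally $(v,w)\in A(D)$ makes the pair $(\al_v,\al_w)$ complete in $\sz$, so $(y,z)\in A(\sz)$ and $z\in\Gamma^+(y)$, completing the verification and hence the proof that $S$ is a \sgf of $\sz$.
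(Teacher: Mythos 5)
Your proof is correct and takes essentially the same route as the paper's: your offset $o_{f(u)}=f(u)+\sum_{j=0}^{f(u)-1}m_j$ is exactly the paper's definition $S(x)=\sum_{j=0}^{i-1}m_j+f(u)+s_u(x)$, and both verifications split on $u=v$ versus $u\not=v$, using disjointness of the blocks for axiom (1) and, for axiom (2), the semi-Grundy property of $f$ together with the equal-maxima hypothesis to produce the returning vertex $z\in V(\al_w)$. If anything, you make explicit two points the paper leaves implicit: that the normalization convention (every \sgf attains all values $0,\ldots,$ its maximum) is what guarantees $s_w$ realizes the specific value $s_u(x)$, and the computation that the maximum of $S$ equals $n+\sum_{i=0}^{n}m_i$.
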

\begin{proof}
  Define $S:V(\sz)\to\na$ as follows:\\
  Let $x\in V(\sz)$ be; we have that there exist a unique $u\in V(D)$ such that $x\in
  V(\al_u)$ and a unique $i\in \{0,1,\ldots,n\}$ such that $u\in f^{-1}(i)$. Then we define:
  \[
    S(x)=\sum_{j=0}^{i-1}m_j+f(u)+s_u(x);    
  \]
  whenever $1\leq i\leq n$ and $S(x)=s_u(x)$ whenever $i=0$. 
  
  We will prove that $S$ is \sgf of $\sz$.
  
  \begin{enumerate}
  \item Let $x\in V(\sz))$ such that $S(x)=k$ and $y\in \Gamma^{+}(x)$. We must show that
    $S(y)\not=k$.
    
    By definition of $\sz$ exist vertices $u, v \in V(D)$ such that
    $x\in V(\al_u)$ and $y\in V(\al_v)$. We will consider two cases:
    $u=v$ and $u\not=v$. If $u=v$ then we have that $x,y\in
    V(\al_u)$. Since $y\in \Gamma^{+}(x)$ and the definition of $\sz$
    follows that $y\in \Gamma^{+}_{\al_u}(x)$. Then,
    $s_u(x)\not=s_u(y)$ because $s_u$ is a \sgf of $\al_u$. Thus:
    \[
      S(x)=\sum_{j=0}^{i-1}m_j+f(u)+s_u(x)\not=\sum_{j=0}^{i-1}m_j+f(u)+s_u(y)=S(y);        
    \]
    (assuming that $f(u)=i$).
    
    Now suppose that $u\not=v$. In this case, from $y\in \Gamma^{+}(x)$ and the definition
    of $\sz$ we conclude that $(u,v)\in A(D)$, and since $f$ is a \sgf $f(u)\not=f(v)$.
    If $f(u)=i<k=f(v)$ we have:
    \begin{equation*}    
      \begin{array}{rcccl}
        S(x) & = & \displaystyle{\sum_{j=0}^{i-1}m_j+f(u)+s_u(x)}&\leq & \displaystyle{\sum_{j=0}^{i-1}m_j+m_i+f(u)}\\
             & < & \displaystyle{\sum_{j=0}^{i-1}m_j+m_i+f(v)}&\leq & \displaystyle{\sum_{j=0}^{k-1}m_j+f(v)}\\
             & \leq & \displaystyle{\sum_{j=0}^{k-1}m_j+f(v)+s_v(y)}& = & S(y)\\           
      \end{array}                 
    \end{equation*}
    Then $S(x)<S(y)$. If $f(u)>f(v)$ the argument is similar. Then in either case, $S(x)\not=S(y)$.   
    
  \item Suppose that $S(x)=k$ and $y\in\Gamma^+(x)$ with $S(y)>k$. We must show that there
    exist $z\in\Gamma^+(y)$ with $S(z)=k$.
    
    Let $u, v\in V(D)$ such that $x\in V(\al_u)$ and $y\in V(\al_v)$. If $u=v$ then from
    the definition of $S$, follows that $s_u(x)<s_u(y)$ and since $s_u$ is a \sgf of
    $\al_u$, must exist $z \in \Gamma^+_{\al_u}(y)$ (and then $z \in \Gamma^+(y)$) such
    that $s_u(z)=s_u(x)$. Then we have that:
    $S(x)=\sum_{j=0}^{i-1}m_j+f(u)+s_u(x)=\sum_{j=0}^{i-1}m_j+f(u)+s_u(z)=S(z)$, thus
    $S(z)=k$.
    
    If $u\not=v$ then $f(u)\not=f(v)$. From the definition of $S$ and the fact that
    $S(x)<S(y)$ follows $f(u)<f(v)$. Since $f$ is a \sgf there exists $w\in
    V(D)$ with $w\in \Gamma^+_D(v)$ and $f(w)=f(u)$. By the hypothesis, we know that
    $\max\{s_u(x)|x\in V(\al_u)\}=\max\{s_w(x)|x\in V(\al_w)\}$, and then there exist
    $z\in V(\al_w)$ with $s_w(z)=s_u(x)$. Then we have:\\ 
    $S(x)=\sum_{j=0}^{i-1}m_j+f(u)+s_u(x)=\sum_{j=0}^{i-1}m_j+f(w)+s_w(z)=S(z)$, thus $S(z)=k$.    
  \end{enumerate}
  Thus $S$ is a \sgf of $\sz$.
\end{proof}

\section{Digraphs $R_n$}

Now we construct a digraph $R_n$, for all $n\geq 2$, with the property that has 2
different Grundy functions: $g^{(n)}_1$ and $g^{(n)}_2$ These functions satisfied:
$\max\{g^{(n)}_1(x)|x\in V(R_n)\}=1$ and $\max\{g^{(n)}_2(x)|x\in V(R_n)\}=n$.
These digraphs are constructed by recursion.

Let $R_2$ be the digraph in Figure \ref{f1}. We define two functions on $V(D)$:
\begin{equation*}  
g^{(2)}_1(\bar{x_p}) = \left\{
\begin{array}{rl}
0, & \text{ if } x+p\equiv 0 \mod 2\\
1, & \text{ if } x+p\equiv 1 \mod 2
\end{array} \right.
\end{equation*}
 and $g^{(2)}_2(\bar{x}_p)=p$. It is easy to see that this functions are \gf of $R_2$. Now we
 construct $R_3$, $R_4$, and so on, by recursion.

Suppose that the digraph $R_n$ is defined and has two Grundy functions $g^{(n)}_1, g^{(n)}_2$ defined
by:
\begin{equation*}  
g^{(n)}_1(\bar{x_p}) = \left\{
\begin{array}{rl}
0, & \text{ if } x+p\equiv 0 \mod 2\\
1, & \text{ if } x+p\equiv 1 \mod 2
\end{array} \right.
\end{equation*}
 and $g^{(n)}_2(\bar{x}_p)=p$. 

 Now, we define the digraph $R_{n+1}$ as
 follows (see Figure \ref{R_3}): 
\begin{itemize}
\item $V(R_{n+1})=V(R_n)\cup \{\bar{0}_{n+1},\bar{1}_{n+1},\bar{2}_{n+1},\bar{3}_{n+1},\}$;
\item $\displaystyle{A(R_{n+1})=A(R_n)\cup \bigcup_{\bar{x}\in\{\bar{0},\bar{1},\bar{2},\bar{3}\}}A_{\bar{x}}}$;\\
    where
    $A_{\bar{x}}=\{(\bar{x}_{n+1},\bar{x}_{n-2i})|0\leq n-2i\leq n, i\in\na\}\cup\\
    \{(\bar{x}_{n+1},\overline{(x+1)}_{n-(2i+1)})|0\leq n-(2i+1)\leq n-1,i\in\na\}$. 
\end{itemize}
Note that the sum in $\overline{(x+1)}$ is the sum of $\en_4$.

     \newpage
     
\begin{figure}[ht]
  \centering
  \includegraphics[width=140mm]{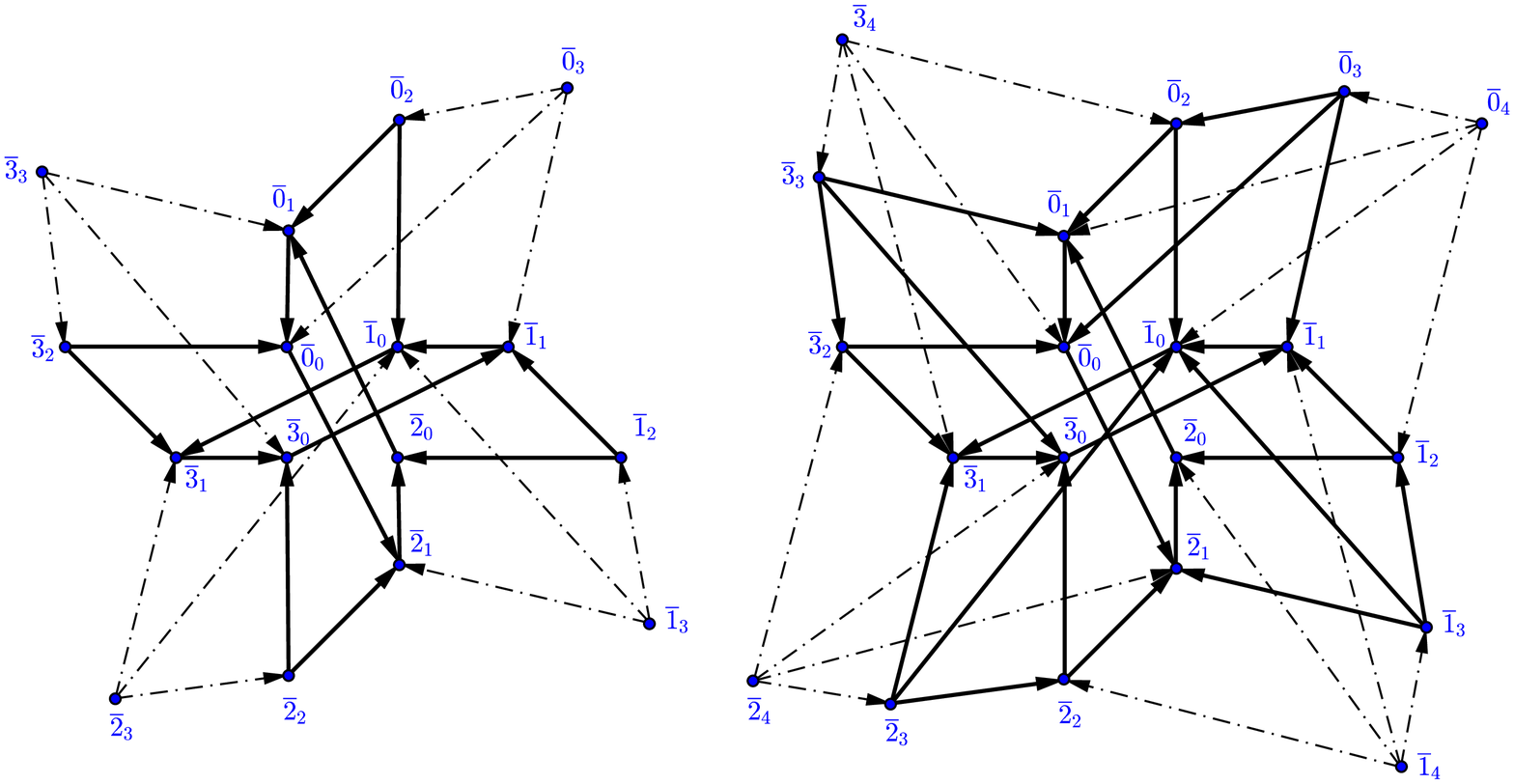}
  \caption{Digraphs $R_3$ constructed from $R_2$ and $R_4$ constructed from $R_3$}
  \label{R_3}
\end{figure}

Now we define two functions on $g^{(n+1)}_1,g^{(n+1)}_2:V(R_{n+1})\to \na$ as follows: 
\begin{equation*}  
g^{(n+1)}_1(\bar{x_p}) = \left\{
\begin{array}{rl}
0, & \text{ if } x+p\equiv 0 \mod 2\\
1, & \text{ if } x+p\equiv 1 \mod 2
\end{array} \right.
\end{equation*}
and $g^{(n+1)}_2(\bar{x}_p)=p$.
\begin{remark}
  Its important to note that the Grundy function $g^{(n)}_1$ only takes two values: $0$
  and $1$, and then, $R_n$ its a bipartite graph. 
\end{remark}

From the definition of both functions is easy to see that
$g^{(n)}_i(\bar{x}_p)=g^{(n+1)}_i(\bar{x}_p)$ if $p<n+1$, $i=1,2$. So, we just have to
  prove that $g^{(n+1)}_i$ extends $g^{(n)}_i$.

  Note that $\{\bar{0}_{n+1}, \bar{1}_{n+1}, \bar{2}_{n+1}, \bar{3}_{n+1}\}$ is an
  independent set of $R_{n+1}$. In $R_{n+1}$, $\bar{x}_{n+1}$ only is adjacent to
  $\bar{x}_{n-2i}$ or $\overline{x+1}_{n-(2i+1)}$. Since $x+(n+1)+x+n-2i=2x+2n-2i+1$, it
  follows that $x+(n+1)\not\equiv x+n-2i \mod 2$, so
  $g_1^{(n+1)}(\bar{x}_{n+1})\not=g_1^{(n+1)}(\bar{x}_{n-2i})$.\\ Since
  $x+(n+1)+x+1+n-(2i+1)=2x+2n+2-2i-1$, it follows that $x+(n+1)\not\equiv x+1+n-(2i+1)
  \mod 2$ and then
  $g_1^{(n+1)}(\bar{x}_{n+1})\not=g_1^{(n+1)}(\overline{x+1}_{n-(2i+1)})$.\\ From this we
  have: $g_1^{(n+1)}$ is a \gf of $R_{n+1}$, and it is a bipartite digraph. 
  
  Now we will prove that $g_2^{(n+1)}$ is a \gf of $R_{n+1}$. Since there are no arcs
  in $R_{n+1}$ that ends in $\bar{x}_{n+1}$, with $x=0,1,2,3$, and the set
  $\{\bar{0}_{n+1}, \bar{1}_{n+1}, \bar{2}_{n+1}, \bar{3}_{n+1}\}$ is an independent set
  of $R_{n+1}$, we just have to prove that for every $j\in \{0,1,\ldots,n\}$ there exists
  $y\in \Gamma^+_{R_{n+1}}(\bar{x}_{n+1})$  such that $g_2^{(n+1)}(y)=j$.
  But, by definition of $R_{n+1}$, there is an arc from $\bar{x}_{n+1}$ to $\bar{x}_{n},
  \overline{x+1}_{n-1},\bar{x}_{n-2},\overline{x+1}_{n-3},....$.   So, $g_2^{(n+1)}$ is
  a \gf of $R_{n+1}$ and $\max\{g_2^{(n+1)}(u)|u\in V(R_{n+1})\}=n+1$.

  Then with this digraphs, we have prove the following theorem:
  \begin{theorem}\label{R_n}
    For any natural number $n$, there is a digraph $D$ with two \gfs $f$ and $g$ such that
    $\max\{f(x)|x\in V(D)\}-\max\{g(x)|x\in V(D)\}=n$.
  \end{theorem}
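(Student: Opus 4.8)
The plan is to recognize that the recursive construction carried out in this section already produces exactly the family of digraphs demanded by the statement, so that the theorem follows by choosing the correct member of the family and reading off the two maximum values. In particular, almost all of the work—namely, verifying that the recursively defined functions are genuine Grundy functions—has been done in the paragraphs preceding the statement, and what remains is a short packaging argument.

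First I would record what the inductive construction supplies. For every $n\geq 2$ it produces a digraph $R_n$ together with two functions $g^{(n)}_1$ and $g^{(n)}_2$, and the induction on $n$ establishes that both are \gfs of $R_n$: the base case $R_2$ is checked directly from the explicit formulas, and the inductive step shows that extending the functions to the four new vertices $\bar{0}_{n+1},\bar{1}_{n+1},\bar{2}_{n+1},\bar{3}_{n+1}$ preserves the defining property on the larger digraph $R_{n+1}$. Granting this, I may treat $(g^{(n)}_1,g^{(n)}_2)$ as a pair of \gfs of $R_n$ without further comment. Next I would read off the two maximum values from the formulas: since $g^{(n)}_1(\bar{x}_p)$ depends only on the parity of $x+p$, its image lies in $\{0,1\}$, so $\max\{g^{(n)}_1(x)\mid x\in V(R_n)\}=1$ (this is the content of the bipartiteness remark); and since $g^{(n)}_2(\bar{x}_p)=p$ with the subscripts ranging over $\{0,1,\ldots,n\}$, we get $\max\{g^{(n)}_2(x)\mid x\in V(R_n)\}=n$.

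Finally, given an arbitrary natural number $n$, I would set $D:=R_{n+1}$ (which is defined, since $n+1\geq 2$), take $f:=g^{(n+1)}_2$ and $g:=g^{(n+1)}_1$, and conclude
\[
\max\{f(x)\mid x\in V(D)\}-\max\{g(x)\mid x\in V(D)\}=(n+1)-1=n,
\]
as required. The only delicate point—already dispatched in the construction—is the verification in the inductive step that $g^{(n+1)}_2(\bar{x}_{n+1})=n+1$: from the explicit arc set $A_{\bar{x}}$ one must confirm that the out-neighbours of $\bar{x}_{n+1}$ realize every value $0,1,\ldots,n$, and that the parity computation forces $g^{(n+1)}_1$ to differ from each out-neighbour's value. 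Because no vertex has an arc into $\bar{x}_{n+1}$, these purely local checks on the new vertices suffice, and the theorem follows immediately from the packaging above.
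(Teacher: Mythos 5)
Your proposal is correct and follows exactly the paper's own route: the paper's ``proof'' of this theorem is precisely the preceding recursive construction of $R_n$ with its two \gfs $g^{(n)}_1$ (maximum $1$, by the parity/bipartiteness observation) and $g^{(n)}_2$ (maximum $n$), and the theorem is then read off from those maxima. Your only addition is the explicit index shift $D:=R_{n+1}$ to make the difference equal exactly $n$, which the paper leaves implicit (and which quietly corrects the off-by-one slip in its abstract), so nothing further is needed.
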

  Note that Theorem \ref{R_n} tells us that is important to have bounds for the \gf or
  \sgf.

\end{document}